\tikzset{myboxgroup/.style={draw, densely dotted}} % style for the boxed groups
\newtheorem{lemma}{Lemma}[section]
\newtheorem{proposition}[lemma]{Proposition}
\newtheorem{theorem}[lemma]{Theorem}
\theoremstyle{definition}
\theoremstyle{remark}
\let\xx@thm\@thm
\crefname{section}{Section}{Sections}
\crefname{definition}{Definition}{Definitions}
\crefname{definitionnodiamond}{Definition}{Definitions}
\crefname{example}{Example}{Examples}
\crefname{examplenodiamond}{Example}{Examples}
\crefname{remark}{Remark}{Remarks}
\crefname{remarknodiamond}{Remark}{Remarks}
\crefname{convention}{Convention}{Conventions}
\crefname{notation}{Notation}{Notations}
\crefname{notationnodiamond}{Notation}{Notations}
\crefname{lemma}{Lemma}{Lemmas}
\crefname{proposition}{Proposition}{Propositions}
\crefname{corollary}{Corollary}{Corollaries}
\crefname{theorem}{Theorem}{Theorems}
\crefname{assumption}{Assumption}{Assumptions}
\crefname{enumi}{}{}
\crefname{equation}{}{}
\crefname{align}{}{}
\crefname{proofstep}{Step}{Steps}
\crefname{table}{Table}{Tables}
\numberwithin{equation}{section}
\def\CC{{\mathbb C}}
\def\PP{{\mathbb P}}
\def\QQ{{\mathbb Q}}
\def\ZZ{{\mathbb Z}}
\def\0ol{{\bar 0}}
\def\1ol{{\bar 1}}
\def\2ol{{\bar 2}}
\def\ol2{{\bar 2}}
\def\3ol{{\bar 3}}
\def\4ol{{\bar 4}}
\def\5ol{{\bar 5}}
\def\6ol{{\bar 6}}
\def\7ol{{\bar 7}}
\def\8ol{{\bar 8}}
\def\9ol{{\bar 9}}
\def\bold0{{\bf 0}}
\def\bold1{{\bf 1}}
\def\bold2{{\bf 2}} 
\def\bold3{{\bf  3}}
\def\bold4{{\bf 4}}
\def\bold5{{\bf 5}}
\def\bold6{{\bf 6}}
\def\bold7{{\bf 7}}
\def\bold8{{\bf 8}}
\def\bold9{{\bf 9}}
\def\P2Skly{\PP^2_{Skly}}
\def\a{\alpha}
\def\b{\beta}
\def\s{\sigma}
\def\ve{\varepsilon}
\def\D{\Delta}
\def\sD{{\sf D}}
\def\sft{{\sf t}}
\def\sfx{{\sf x}}
\def\sfy{{\sf y}}
\def\cal{\mathcal}
\def\cE{{\cal E}}
\def\cF{{\cal F}}
\def\cL{{\cal L}}
\def\cO{{\cal O}}
\def\Aut{\operatorname{Aut}}
\def\coh{{\sf coh}}
\def\dirlim{\mathop{\vtop{\baselineskip -100pt\lineskip -1pt\lineskiplimit 0pt
\setbox0\hbox{lim}\copy0\hbox to \wd0{\rightarrowfill}}}\limits}
\def\invlim{\mathop{\vtop{\baselineskip -100pt\lineskip -1pt\lineskiplimit 0pt
\setbox0\hbox{lim}\copy0\hbox to \wd0{\leftarrowfill}}}\limits}
\def\I11{{1 \kern -0.8pt \! \mbox{l}}}
\def\mumu{{\mu\kern-4.2pt\mu}}
\def\bfmu{{\mu\kern-4.2pt\mu}}
\def\2slash{\backslash \! \backslash}
\def\hdot{{\:\raisebox{3.2pt}{\text{\circle*{3.4}}}}}
\newcommand{\pr}{\operatorname{pr}}
\newcommand{\sfsum}{\operatorname{\mathsf{sum}}}
\def\l@subsection{\@tocline{2}{0pt}{2.75pc}{5pc}{}}
\begin{document}

\title[Finite quotients of powers of an elliptic curve]{Finite quotients of powers of an elliptic curve}

\author{Alex Chirvasitu, Ryo Kanda, and S. Paul Smith}

\address[Alex Chirvasitu]{Department of Mathematics, University at
  Buffalo, Buffalo, NY 14260-2900, USA.}
\email{achirvas@buffalo.edu}

\address[Ryo Kanda]{Department of Mathematics, Graduate School of Science, Osaka City University, 3-3-138, Sugimoto, Sumiyoshi, Osaka, 558-8585, Japan.}
\email{ryo.kanda.math@gmail.com}

\address[S. Paul Smith]{Department of Mathematics, Box 354350,
  University of Washington, Seattle, WA 98195, USA.}
\email{smith@math.washington.edu}

\subjclass[2010]{14L30 (Primary), 14E20, 14H52, 14A22, 16S38 (Secondary)}

\keywords{Quotient variety; \'etale cover; symmetric group; elliptic algebra; characteristic variety}

% \thanks{ was supported by the National Science Foundation,
%  Award No.  }

\begin{abstract}
Let $E$ be an elliptic curve. When the symmetric group $\Sigma_{g+1}$ of order $(g+1)!$ acts on $E^{g+1}$ in the natural way, the subgroup $E_0^{g+1}$, consisting of those $(g+1)$-tuples whose coordinates sum to zero, is stable under the action of $\Sigma_{g+1}$. It is isomorphic to $E^g$. This paper concerns the structure of the quotient variety $E^g/\Sigma$ when $\Sigma$ is a subgroup of $\Sigma_{g+1}$ generated by simple transpositions. In an earlier paper we observed that $E^g/\Sigma$ is a bundle over a suitable power, $E^N$, with fibers that are products of projective spaces. This paper shows that $E^g/\Sigma$ has an \'etale cover by a product of copies of $E$ and projective spaces with an abelian Galois group.
\end{abstract}

\maketitle

\tableofcontents{}

%%%%%%%%%%%%%%%%%%%%%%%%%%%%%%%%%%%%%%%%%%%%%%%%%%%%%%%%%%%%%%%%%%%%
%%%%%%%%%%%%%%%%%%%%%%%%%%%%%%%%%%%%%%%%%%%%%%%%%%%%%%%%%%%%%%%%%%%
\section{Introduction}
\label{sect.intro}
%%%%%%%%%%%%%%%%%%%%%%%%%%%%%%%%%%%%%%%%%%%%%%%%%%%%%%%%%%%%%%%%%%%  
  
 Let $A$ be an abelian variety.
 The problem of describing $A/G$ when $G$ is a finite subgroup of $ \Aut(A)$ has attracted some attention.\footnote{
 $\Aut(A)$ denotes the group of automorphisms of $A$ as an algebraic group.} See, for example, the introduction to \cite{AA18}.
By \cite[Thm.~1.3]{AA18}, if $A/G$ is smooth and $\dim(A^G)=0$, then $A$ is a product of elliptic curves and $A/G$ is a 
 product of projective spaces. The authors of {\it loc. cit.} confine their attention to {\it complex } abelian varieties.
  
Let $E$ be an elliptic curve and $E^g$ the product of $g$ copies of $E$.
This paper describes the structure of certain quotient varieties $E^g/\Sigma$ when $\Sigma$ is a particular kind of 
 finite subgroup of $\Aut(E^g)$. 
 
 We work over an algebraically closed field $\Bbbk$.

 \subsection{Quotients of powers of an elliptic curve}  
 \label{sect.the.problem} 
Identify $E^g$ with the subgroup $E^{g+1}_0$ of $E^{g+1}$ consisting of those points whose coordinates  sum to $0$ via the injection $\ve\colon E^{g}\to E^{g+1}$ given by
\begin{equation*}
	\ve(z_{1},\ldots,z_{g})\;=\;(z_{1},z_{2}-z_{1},\ldots,z_{g}-z_{g-1},-z_{g}).
\end{equation*}
The natural action of the symmetric group $\Sigma_{g+1}$ of order $(g+1)!$ on 
$E^{g+1}$ sends $E^{g+1}_0$ to itself.  
Let $\Sigma$ be a subgroup of $\Sigma_{g+1}$ generated by some of the simple reflections $(i,i+1)$. 
The problem we address is this: 
\begin{equation}
\label{problem}
\text{\it what is the structure of the quotient variety $E^g/\Sigma$ when $E^g$ is identified with $E_0^{g+1}$?}
\end{equation}
To state our main result we need some notation. Let  
\begin{align*}
J & \; := \; \text{the points in $\{1,\ldots,g+1\}$ fixed by $\Sigma$},
\\
I_1,\ldots,I_s  & \; := \; \text{the $\Sigma$-orbits in $\{1,\ldots,g+1\}$ of size $\ge 2$},
\\
\Sigma_{I_\a} & \; :=\; \text{the group of permutations of $I_\a$},
\\
i_\a & \; :=\; |I_\a|,
\\
d & \; :=\; \gcd\{i_1,\ldots,i_s\}.
\end{align*}
For $m \in \ZZ$, let $m:E \to E$ be the multiplication by $m$ map and let $E[m]$ denote its kernel. 
Define
\begin{equation}\label{eq:1}
  E[i_1]\hdot \cdots\hdot E[i_s] \; := \; \ker\left(E[i_1]\times\cdots\times E[i_s] \, \longrightarrow \, E[d]\right)
\end{equation}
where the morphism on the right is the map
$$
(z_1,\ldots,z_s) \, \mapsto \, \frac{i_1}{d}z_1 + \cdots + \frac{i_s}{d}z_s.
$$
In \cite{CKS2} we showed $E^g/\Sigma$ is a bundle over $E^{|J|+s-1}$ with fibers  
isomorphic to $\PP^{i_1-1} \times  \cdots \times \PP^{i_s-1}$ (see \cref{thm1} below). The main result in this paper provides
a nice \'etale cover of $E^g/\Sigma$.

\begin{theorem}
\label{main.thm}
Assume $\mathrm{char}(\Bbbk)$ does not divide the product $i_1\cdots i_s$. 
\begin{enumerate}
  \item 
  If $J\ne \varnothing$,  there is an \'etale cover
  $$
\Psi:E^{|J|+s-1} \times \PP^{i_1-1} \times  \cdots \times \PP^{i_s-1} 
\; \longrightarrow \; E^g/\Sigma
$$
with Galois group $E[i_1]\times\cdots\times E[i_s]$. 
  \item 
  If $J = \varnothing$, there is an \'etale cover
$$
\Psi:E^{s-1} \times \PP^{i_1-1} \times  \cdots \times \PP^{i_s-1} 
\; \longrightarrow \; E^g/\Sigma
$$
with Galois group $E[i_1] \hdot \cdots \hdot E[i_s]$.
\end{enumerate}
\end{theorem}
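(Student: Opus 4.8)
The plan is to build the cover one orbit at a time, starting from the single-symmetric-group case. Fix an integer $n$ with $\mathrm{char}(\Bbbk)\nmid n$ and consider the morphism
$$\Phi_n\colon E\times E_0^n\;\longrightarrow\;E^n,\qquad (e;w_1,\dots,w_n)\;\longmapsto\;(e+w_1,\dots,e+w_n),$$
where $E_0^n\subseteq E^n$ is the sum-zero subgroup. One checks directly that $\Phi_n$ is surjective, that it is invariant under the free $E[n]$-action $a\cdot(e;w_1,\dots,w_n)=(e+a;w_1-a,\dots,w_n-a)$, and that its fibres are exactly the $E[n]$-orbits; since $E[n]$ is étale under the hypothesis, $\Phi_n$ is a finite étale $E[n]$-torsor. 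Moreover $\Phi_n$ is $\Sigma_n$-equivariant (permuting the $w_i$ on the source and the coordinates of $E^n$ on the target), and the $\Sigma_n$- and $E[n]$-actions on $E\times E_0^n$ commute. Passing to $\Sigma_n$-quotients and using $E_0^n/\Sigma_n\cong\PP^{n-1}$ — the case $s=1$, $J=\varnothing$ of \cref{thm1} — turns $\Phi_n$ into a finite étale $E[n]$-torsor $E\times\PP^{n-1}\to E_0^n/\Sigma_n$. This is the shape of the answer on a single block.

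Next I assemble. Write $\{1,\dots,g+1\}=J\sqcup I_1\sqcup\dots\sqcup I_s$, so that $E^{g+1}=E^J\times\prod_\alpha E^{I_\alpha}$ and $\Sigma=\prod_\alpha\Sigma_{I_\alpha}$ acts only on the factors $E^{I_\alpha}$. Put $G:=\prod_\alpha E[i_\alpha]$ and form
$$\Phi\;:=\;\mathrm{id}_{E^J}\times\prod_\alpha\Phi_{i_\alpha}\colon\quad E^J\times\prod_\alpha\bigl(E\times E_0^{I_\alpha}\bigr)\;\longrightarrow\;E^{g+1},$$
a finite étale $G$-torsor, equivariant for the commuting actions of $G$ and $\Sigma$. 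Restrict over the sum-zero subvariety $E_0^{g+1}=E^g$: a short computation shows that the only constraint produced is $\sum_{j\in J}z_j+\sum_\alpha i_\alpha e_\alpha=0$, which does not involve the $w^{(\alpha)}$, so the total space of the restricted torsor is $W\times\prod_\alpha E_0^{I_\alpha}$ with
$$W\;:=\;\ker\Bigl(\,E^J\times E^s\;\longrightarrow\;E,\quad\bigl((z_j)_j,(e_\alpha)_\alpha\bigr)\;\longmapsto\;{\textstyle\sum_j z_j+\sum_\alpha i_\alpha e_\alpha}\,\Bigr).$$
On this total space $G$ acts on $W$ only by translating the coordinates $e_\alpha$, which are $\Sigma$-invariant; hence $G$ still acts freely after passing to $\Sigma$-quotients, and since $G$ and $\Sigma$ commute we obtain a finite étale $G$-torsor
$$\Psi_0\colon\quad W\times\PP^{i_1-1}\times\dots\times\PP^{i_s-1}\;\longrightarrow\;E^g/\Sigma$$
(again using $E_0^{I_\alpha}/\Sigma_{I_\alpha}\cong\PP^{i_\alpha-1}$).

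It remains to analyse $W$. If $J\ne\varnothing$ the defining homomorphism has a variable of coefficient $1$, so projecting it away gives an isomorphism $W\cong E^{|J|+s-1}$, and $\Psi_0$ is exactly the cover of part (1), with Galois group $G=\prod_\alpha E[i_\alpha]$. If $J=\varnothing$, write $i_\alpha=d\,m_\alpha$ with $\gcd(m_1,\dots,m_s)=1$; then the defining homomorphism factors as $E^s\xrightarrow{(e_\alpha)\mapsto\sum m_\alpha e_\alpha}E\xrightarrow{[d]}E$, the first map is a surjection with connected kernel $\cong E^{s-1}$ (its coefficient vector is primitive, hence part of a $\ZZ$-basis), so $W$ is the disjoint union of the $d^2$ translates of that kernel lying over $E[d]$; let $W^\circ\cong E^{s-1}$ be the identity component. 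The key point is how $G$ permutes these components: translating $e_\alpha$ by $a_\alpha\in E[i_\alpha]$ moves the class of $\sum_\alpha m_\alpha e_\alpha$ in $E[d]$ by $\sum_\alpha m_\alpha a_\alpha$, that is, by the image of $(a_\alpha)$ under precisely the homomorphism $\prod_\alpha E[i_\alpha]\to E[d]$ of \cref{eq:1}; this homomorphism is surjective because multiplication by $m_\alpha$ carries $E[i_\alpha]=E[dm_\alpha]$ onto $E[d]$. Hence $G$ acts transitively on the set of components, with stabilizer of $W^\circ$ equal to $E[i_1]\hdot\dots\hdot E[i_s]$; restricting the $G$-torsor $\Psi_0$ to the clopen piece $W^\circ\times\prod_\alpha\PP^{i_\alpha-1}$ (which then surjects onto $E^g/\Sigma$, since $\Psi_0$ is $G$-invariant and $G$ permutes the components transitively) yields the cover of part (2), with Galois group $E[i_1]\hdot\dots\hdot E[i_s]$.

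The genuinely delicate step is this last one — identifying the component-permutation action of $G$ on $W$ with the homomorphism built into \cref{eq:1}, and hence pinning down the Galois group in the case $J=\varnothing$. The rest is formal bookkeeping with torsors, the one point worth isolating being that $G$ acts on the whole construction through the $\Sigma$-invariant coordinates $e_\alpha$, so that taking $\Sigma$-quotients leaves the torsor structure intact.
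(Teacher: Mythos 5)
Your argument is correct and follows essentially the same route as the paper: the separable isogeny $E\times E_0^n\to E^n$ with kernel $E[n]$ commuting with the $\Sigma_n$-action (\cref{lem0,lem1}), assembled over the blocks $I_\a$, with the $J=\varnothing$ case settled by observing that the $d^2$ components of $\ker\Theta$ (\cref{le.kertheta}) are permuted transitively by $E[i_1]\times\cdots\times E[i_s]$ with stabilizer of the identity component equal to the group in \cref{eq:1} (\cref{pr.hard-case}); the only organizational difference is that you restrict to the sum-zero locus before passing to $\Sigma$-quotients rather than after, which is immaterial since the two operations commute. One small slip to fix: the single-block cover obtained from $\Phi_n$ by passing to $\Sigma_n$-quotients lands in $E^n/\Sigma_n=S^nE$, not in $E_0^n/\Sigma_n$.
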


As is well-known, $E^g/\Sigma_{g+1} \cong \PP^g$. 
Another simple case of this theorem is the fact that the symmetric power, $S^rE$, is the quotient of $E \times \PP^{r-1}$ with
respect to a free action of $E[r]$.

In \cref{sect.lift} we show that certain ``translation automorphisms'' of $E^g/\Sigma$ lift to the \'etale cover. 
That gives a positive answer to a question that arises from our work on elliptic algebras in \cite{CKS2}. 
  
\subsection{Origin of the problem}
\label{origin1}
The question in \cref{problem} came up when we were studying the elliptic algebras $Q_{n,k}(E,\tau)$ introduced by
Feigin and  Odesskii in the late 1980's \cite{FO-Kiev,FO89}. Much of the representation theory of $Q_{n,k}(E,\tau)$
is controlled by its ``characteristic variety''. The main result in \cite{CKS2} shows that the characteristic variety, which was 
defined by Feigin and Odesskii in terms of the sheaf $\cL_{n/k}$ defined in \cref{sect.Lnk} below,
is a quotient, $E^g/\Sigma$, of the form described in \cref{sect.the.problem}.

Thus, the question ``what  is the structure of  the characteristic variety'' reduces to a problem in algebraic geometry 
that is of interest independently of its connection to Feigin and  Odesskii's algebras.  

\subsubsection{}
The remarks in \cite[\S3.3]{FO89} suggest that some part of the results in this paper may have been known to 
Feigin and Odesskii in the late 1980's. 
However, they did not describe the characteristic variety as a quotient of $E^g$ or provide proofs of their remarks in \cite[\S3.3]{FO89}
so we are doing that here. The proofs were not obvious to us.

\subsection{Relation to a Fourier-Mukai transform}
\label{origin2}
Another indication of the significance of $E^g/\Sigma$ is its relation to one of the most famous Fourier-Mukai transforms.

Let $\cE(k,n)$ denote the set of isomorphism classes of indecomposable locally free $\cO_E$-modules of rank $k$ and degree $n$.
Suppose that $n>k \ge 1$ are coprime. There is a well-known auto-equivalence of the bounded derived category 
$\sD^b(\coh(E))$ that sends $\cE(1,0)$ bijectively to $\cE(k,n)$; that auto-equivalence is the Fourier-Mukai transform
associated to a bundle on $E \times E$. There is a less well-known way to obtain such an auto-equivalence: there is an integer $g \ge 2$
and an invertible $\cO_{E^g}$-module $\cL_{n/k}$ such that 
the functor
$$
 {\bf R}\pr_{1*}  ( \cL_{n/k} \otimes^{\bf L} \pr_g^* (\, \cdot\,))
$$
is an auto-equivalence of $\sD^b(\coh(E))$ that sends $\cE(1,0)$ bijectively to $\cE(k,n)$. Since $\cL_{n/k}$ is generated by its global sections and $\dim H^0(E^g,\cL_{n/k})=n$ one can ask for a description of the image of the morphism
$$
\Phi :E^g \, \longrightarrow \, \PP^{n-1} \,=\, \PP\big(H^0(E^g,\cL_{n/k})^*\big).
$$
The characteristic variety in \cref{origin1}  is, by definition, the image of $\Phi$. It is of the form $E^g/\Sigma$.

We refer the reader to \cite[\S\S1.2 and~1.3]{CKS2} for more about this.

%%%%%%%%%%%%%%%%%%%%%%%%%%%%%
\subsection{The integer $g$ and the sheaf $\cL_{n/k}$}
\label{sect.Lnk}
We now explain how $g$ and $\cL_{n/k}$ are determined by the pair $(n,k)$ when $n>k \ge 1$ and $\gcd\{n,k\}=1$.

Given a locally free $\cO_E$-module $\cF$, we denote by 
$$
c_1(\cF) \; :=\; \begin{pmatrix}    {\rm deg}(\cF)    \\ {\rm rank}(\cF) \end{pmatrix}
$$
its first Chern class. The action of ${\rm PSL}(2,\ZZ)$ on $\PP^1_\QQ = \QQ \sqcup \{\infty\}$ by fractional linear transformations
$$
\begin{pmatrix} a & b \\ c & d  \end{pmatrix} \cdot z \; :=\; \frac{az+b}{cz+d}
$$
is transitive. The following question arises: given $A \in {\rm PSL}(2,\ZZ)$ such that $A\cdot\frac{0}{1} = \frac{n}{k}$,
express $A$ in terms of the ``standard generators''
$$
S \; = \;  \begin{pmatrix} 0 & -1 \\ 1 & 0 \end{pmatrix}
\qquad \text{and} \qquad 
T \; = \;  \begin{pmatrix} 1 & 1 \\ 0 & 1 \end{pmatrix}.
$$
There are unique integers $g \ge 1$ and $n_1,\ldots,n_g$, all $\ge 2$, such that
$$
A \;=\; T^{n_1}ST^{n_2}S \cdots ST^{n_g}.
$$
This is the origin of the integer $g$ and the integers $n_1,\ldots,n_g$\footnote{The integers $n_1,\ldots,n_g$ can be obtained from 
$(n,k)$ via a version of the Euclidean algorithm.}
 that are used to construct $\cL_{n/k}$ according to the 
formula
$$
\cL_{n/k} \;:=\;  \big(\cL^{n_1} \boxtimes \cdots \boxtimes \cL^{n_g}\big)\Bigg(\bigotimes_{j=1}^{g-1} \pr_{j,j+1}^*(\cL \boxtimes \cL)(-\D')\Bigg)
$$
where $\cL=\cO_E((0))$, $\D'=\{(z,-z) \; | \; z \in E\} \subseteq E^2$, $\pr_j:E^g \to E$ is the projection to the $j^{\rm th}$ factor, and 
$\pr_{j,j+1} :E^g \to E^2$ is the projection to the $j^{\rm th}$ and $(j+1)^{\rm th}$ factors.

%%%%%%%%%%%%%%%%%%%%%%%%%%%%%%%%%%%%%%%%%%%%%%%%%%%%%%%%%%%%%%%%%%%
\subsection{Relation to the results in \cite{AA18}}
\label{sect.aa1}
%%%%%%%%%%%%%%%%%%%%%%%%%%%%%%%%%%%%%%%%%%%%%%%%%%%%%%%%%%%%%%%%%%%

Shortly after writing the first version of this paper we realized that \cref{main.thm}, and \cref{thm1} which we proved in 
\cite{CKS2}, have substantial overlap with \cite[Prop.~2.9]{AA18} once one makes the appropriate translation between the two papers. 
Nevertheless, our approach is a little different, perhaps more elementary, though proving less, works for an arbitrary field $\Bbbk$,
and  is a little more explicit. Our formulation of the results is also easier to apply to elliptic algebras.\footnote{Computing the 
integers $n_1,\ldots,n_g$ in \cref{sect.Lnk} is straightforward; $\Sigma$ is the subgroup of $\Sigma_{g+1}$ generated by the 
transpositions $(i,i+1)$ for which $n_i=2$.}

We now explain how to translate between this paper and \cite{AA18} (this won't make 
complete sense until we set up notation in  \cref{sect.results}). Our $E^g$ and $\Sigma$ are special cases of 
the abelian variety $A$ and the group $G$ in \cite{AA18}.  In \cite{AA18}, the \'etale cover of 
$A/G$ is obtained by first describing a $G$-isogeny $A_0 \times P_G  \to A$, then showing that this induces an 
\'etale cover $A_0 \times (P_G/G) \to A/G$ whose Galois group is the kernel of the isogeny, $\D$ in the notation of \cite{AA18}.
In contrast, we first describe the quotient $E^g/\Sigma$ in explicit terms, then describe an \'etale cover of that. When $A$ is specialized to
$E^g$, $A_0$ becomes the connected component of the identity in the $\Sigma$-invariant subgroup of $E^{g+1}_0\cong E^g$;
thus, $A_0$ is $E^{|J|+s-1}$; $P_G$ is the subgroup $E_0^{I_1} \times \cdots \times E_0^{I_s}$; 
the Galois group $\D$ specializes to the Galois group in \cref{main.thm}; 
the fibration $A/G \to A_0/(A_0 \cap P_G)$ specializes to the fibration in \cref{thm1}; 
the \'etale cover in \cref{main.thm} is the quotient $A_0 \times (P_G/G) \to A/G$. We do not use the very nice idea  \cite{AA18} that the decomposition of the tangent space $T_0A$ at the identity as a direct sum of irreducible $G$-modules leads to an analogous decomposition  of $A$ (it is generated by $A_0$ and the product of the subgroups $A_1 \times \cdots \times A_r$);  
our $E^{I_\a}$'s and $\Sigma_{I_\a}$'s are special cases of the $A_i$'s (for $i>0$) and the $G_i$'s in \cite[Lem.~2.6]{AA18}.

%%%%%%%%%%%%%%%%%%%%%%%%%%%%%%%%%%%%%%%%%%%%%%%%%%%%%%%%%%%%%%%%%%%
\subsection{Acknowledgements}
%%%%%%%%%%%%%%%%%%%%%%%%%%%%%%%%%%%%%%%%%%%%%%%%%%%%%%%%%%%%%%%%%%%

A.C. was partially supported by NSF grant DMS-1801011.

R.K. was a JSPS Overseas Research Fellow, and supported by JSPS KAKENHI Grant Numbers JP16H06337, JP17K14164, and JP20K14288, Leading Initiative for Excellent Young Researchers, MEXT, Japan, and Osaka City University Advanced Mathematical Institute (MEXT Joint Usage/Research Center on Mathematics and Theoretical Physics JPMXP0619217849). R.K. would like to express his deep gratitude to Paul Smith for his hospitality as a host researcher during R.K.'s visit to the University of Washington.

%%%%%%%%%%%%%%%%%%%%%%%%%%%%%%%%%%%%%%%%%%%%%%%%%%%%%%%%%%%%%%%%%%
\section{Results}  
\label{sect.results}
%%%%%%%%%%%%%%%%%%%%%%%%%%%%%%%%%%%%%%%%%%%%%%%%%%%%%%%%%%%%%%%%%%

\subsection{What we already know}
The starting point for this paper is the description of  $E^g/\Sigma$ in \cite[Props.~1.3 and~4.21]{CKS2}. We state that result in 
\cref{thm1} below.

The partition $\{1,\ldots,g+1\}= J \sqcup I_1 \sqcup  \cdots \sqcup I_s$ corresponds to 
factorizations $E^{g+1}=  E^{J} \times E^{I_1}  \times \cdots \times E^{I_s}$ 
and $\Sigma=\Sigma_{I_1} \times \cdots \times \Sigma_{I_s}$. Both these are factorizations as a product of subgroups.
It follows that
\begin{equation}
\label{E^(g+1)/Sigma}
E^{g+1}/\Sigma \; \cong \;  E^{J} \times S^{I_1}E \times \cdots \times S^{I_s}E 
\end{equation}
where $S^{I_\a}E=E^{I_\a}/\Sigma_{I_\a}$ is the symmetric power of $E$ of dimension $i_\a$. We also write $S^{i_\a}E$ for $S^{I_\a}E$.  As is well-known, the summation function ${\sf sum}:S^rE \to E$ presents $S^rE$ as a $\PP^{r-1}$ bundle over $E$.  Thus, $E^{g+1}/\Sigma$ is a bundle over $E^{J} \times E^s$ with fibers isomorphic to $\PP^{i_1-1} \times \cdots \times \PP^{i_s-1}$.

\begin{proposition} 
\cite[Props.~1.3 and~4.21]{CKS2}
\label{thm1}
With the above notation, $E^g/\Sigma$ is a bundle over $E^{|J|+s-1}$ with fibers 
isomorphic to $\PP^{i_1-1} \times  \cdots \times \PP^{i_s-1}$. More precisely,
\begin{equation}
\label{E^g/Sigma}
E^g/\Sigma \; \cong \; 
\begin{cases} 
\big(S^{i_1}E \times  \cdots \times S^{i_s}E\big)_0 & \text{if $J=\varnothing$,}
\\
E^{|J|-1} \times S^{i_1}E \times  \cdots \times S^{i_s}E &\text{if  $J \ne \varnothing$,}
\end{cases}
\end{equation}
where $(S^{i_1}E \times  \cdots \times S^{i_s}E)_0$ denotes the subvariety of $S^{i_1}E \times  \cdots \times S^{i_s}E$ whose 
coordinates sum to $0$.
\end{proposition}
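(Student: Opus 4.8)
\emph{The plan and the easy cases.} The plan is to compute $E^{g+1}/\Sigma$ first and then pass to the subgroup $E_0^{g+1}\cong E^g$ on which the coordinates sum to zero. Because $\Sigma$ is generated by simple transpositions, $J$ and the orbits $I_1,\dots,I_s$ are the connected components of the graph on $\{1,\dots,g+1\}$ having an edge $\{i,i+1\}$ for each transposition $(i,i+1)\in\Sigma$; hence each $I_\a$ is an interval, $\Sigma=\Sigma_{I_1}\times\cdots\times\Sigma_{I_s}$, and the decomposition $E^{g+1}=E^{J}\times E^{I_1}\times\cdots\times E^{I_s}$ is $\Sigma$-equivariant, with $\Sigma_{I_\a}$ permuting the coordinates of $E^{I_\a}$ and fixing the other factors. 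Since every $\Bbbk$-algebra is flat over $\Bbbk$, the formation of invariants under a finite group commutes with $\otimes_\Bbbk$, so the quotient of a product of varieties by a product action (one factor group acting on each factor) is the product of the quotients; thus
\begin{equation*}
E^{g+1}/\Sigma\;\cong\;E^{|J|}\times S^{i_1}E\times\cdots\times S^{i_s}E,
\end{equation*}
and the $\Sigma$-invariant addition map $E^{g+1}\to E$ descends to the total-sum map on the right. If $J\ne\varnothing$, fix $j_0\in J$: deleting the $j_0$-th coordinate is a $\Sigma$-equivariant isomorphism $E_0^{g+1}\xrightarrow{\sim}E^{J\setminus\{j_0\}}\times E^{I_1}\times\cdots\times E^{I_s}$ (the deleted coordinate is recovered as minus the sum of the others, a function symmetric in each block and fixed by $\Sigma$), so quotienting again gives $E^g/\Sigma\cong E^{|J|-1}\times S^{i_1}E\times\cdots\times S^{i_s}E$, the asserted formula in that case.

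\emph{The case $J=\varnothing$.} Here there is no free coordinate to absorb the constraint: $E_0^{g+1}$ is the $\Sigma$-stable closed subvariety of $X:=E^{I_1}\times\cdots\times E^{I_s}$ on which the coordinates sum to zero, its image under the quotient map $q\colon X\to X/\Sigma=S^{i_1}E\times\cdots\times S^{i_s}E$ is $(S^{i_1}E\times\cdots\times S^{i_s}E)_0$, and $q^{-1}$ of the latter is again $E_0^{g+1}$. I would prove that $q$ restricts to an isomorphism $E_0^{g+1}/\Sigma\xrightarrow{\sim}(S^{i_1}E\times\cdots\times S^{i_s}E)_0$ as follows. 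The target is smooth: it is the fibre product of the smooth morphism $\prod_\a S^{i_\a}E\to E^s$ (a product of $\PP^{i_\a-1}$-bundles) with the smooth subgroup $E_0^s$ (the kernel of $E^s\to E$), which is $\cong E^{s-1}$. Next, a general point of $E_0^{g+1}$ has pairwise distinct coordinates, so $\Sigma$ acts generically freely on the irreducible variety $E_0^{g+1}$; hence the finite surjective morphism $E_0^{g+1}\to(S^{i_1}E\times\cdots\times S^{i_s}E)_0$ induced by $q$, whose fibres are the $\Sigma$-orbits, is generically \'etale of degree $|\Sigma|$, as is the quotient morphism $E_0^{g+1}\to E_0^{g+1}/\Sigma$. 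Therefore the induced finite morphism $E_0^{g+1}/\Sigma\to(S^{i_1}E\times\cdots\times S^{i_s}E)_0$ has degree $1$, and a finite birational morphism onto a normal (here even smooth) variety is an isomorphism. This establishes the formula for $J=\varnothing$.

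\emph{Bundle structure and the main obstacle.} It remains to read off the bundle structure. When $J\ne\varnothing$, the product of the maps $S^{i_\a}E\to E$ makes $E^{|J|-1}\times\prod_\a S^{i_\a}E$ a $\PP^{i_1-1}\times\cdots\times\PP^{i_s-1}$-bundle over $E^{|J|-1}\times E^s=E^{|J|+s-1}$; when $J=\varnothing$, $(S^{i_1}E\times\cdots\times S^{i_s}E)_0$ is the base change of the product bundle $\prod_\a S^{i_\a}E\to E^s$ along $E_0^s\cong E^{s-1}\hookrightarrow E^s$, so it is such a bundle over $E^{s-1}=E^{|J|+s-1}$. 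I expect the crux to be the isomorphism in the case $J=\varnothing$: because $\mathrm{char}(\Bbbk)$ may divide $|\Sigma|=\prod_\a i_\a!$ there is no Reynolds operator, so ``the quotient of a $\Sigma$-stable closed subvariety equals the corresponding closed subvariety of the quotient'' is not formal — finite quotients need not commute with the non-flat base change $\{0\}\hookrightarrow E$ — and it has to be obtained instead from the degree count together with Zariski's main theorem, for which the smoothness of $(S^{i_1}E\times\cdots\times S^{i_s}E)_0$ and the generic freeness of the $\Sigma$-action on $E_0^{g+1}$ are exactly the facts doing the work.
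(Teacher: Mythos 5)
The paper does not actually prove this proposition: it is imported from \cite{CKS2} (Props.~1.3 and~4.21), and the only argument supplied here is the surrounding sketch --- the factorization $E^{g+1}/\Sigma\cong E^J\times S^{I_1}E\times\cdots\times S^{I_s}E$ followed by restriction to the sum-zero locus, dropping a coordinate of $E^J$ when $J\ne\varnothing$. Your proposal follows exactly that strategy; your $J\ne\varnothing$ case coincides with the paper's own remark after the proposition, and you correctly isolate the real difficulty in the $J=\varnothing$ case, namely that forming $\Sigma$-invariants need not commute with the non-flat base change $\{0\}\hookrightarrow E$ when $\mathrm{char}(\Bbbk)$ divides $|\Sigma|$.

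One step in your $J=\varnothing$ argument is too quick, and it is precisely the step carrying the characteristic-$p$ weight. Generic freeness of the $\Sigma$-action tells you that the general fibre of $E_0^{g+1}\to\big(S^{i_1}E\times\cdots\times S^{i_s}E\big)_0$ has $|\Sigma|$ points, i.e.\ it controls only the \emph{separable} degree of the function-field extension; it does not by itself exclude a total degree $|\Sigma|p^k$ (compare Frobenius: one point per fibre, degree $p$). If the degree were $|\Sigma|p^k$, your count would only show that $E_0^{g+1}/\Sigma\to\big(\prod_\a S^{i_\a}E\big)_0$ is finite and purely inseparable, and Zariski's main theorem would not finish. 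The fix is short: with $X:=\prod_\a E^{I_\a}$ and $q\colon X\to X/\Sigma$, both $E_0^{g+1}\subseteq X$ and $\big(\prod_\a S^{i_\a}E\big)_0\subseteq X/\Sigma$ are the scheme-theoretic zero-fibres of the respective summation maps to $E$, and the summation map on $X$ is the composite of $q$ with the one on $X/\Sigma$; hence $E_0^{g+1}$ is the scheme-theoretic preimage $q^{-1}\big((\prod_\a S^{i_\a}E)_0\big)$. Over the open locus $U\subseteq X$ where $\Sigma$ acts freely, $q$ is finite locally free of degree $|\Sigma|$ and \'etale (\cref{pr.quot-etale}), so the restriction $E_0^{g+1}\cap U\to\big(\prod_\a S^{i_\a}E\big)_0\cap (U/\Sigma)$ is a base change of an \'etale map of degree $|\Sigma|$ and is therefore itself \'etale of degree $|\Sigma|$. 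With that substitution your degree count, and hence the rest of your argument (normality of the target plus finite birational implies isomorphism), goes through.
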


When $J\neq\varnothing$, the description of $E^{g}/\Sigma$ is obtained as the subvariety of $E^{J} \times S^{I_1}E \times \cdots \times S^{I_s}E$ whose coordinates sum to $0$, which is identified with $E^{|J|-1} \times S^{i_1}E \times  \cdots \times S^{i_s}E$ by dropping the last coordinate of $E^{J}$. This identification will be used in \cref{sect.lift}.

\subsection{An \'etale cover of $S^rE$}
Recall that $E[r]$ denotes the $r$-torsion subgroup of $E$.
We will now show that $S^rE$ is a quotient of $E \times \PP^{r-1}$ with respect to a free action of $E[r]$. 

We first recall the following result on free finite group actions. 

\begin{proposition}\label{pr.quot-etale}
  Let $G$ be a finite group acting freely on a variety $X$ over $\Bbbk$. Then, the quotient $\pi: X\to X/G$ is finite \'etale.
\end{proposition}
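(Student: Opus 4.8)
The plan is to work locally on the base and reduce everything to the ring of invariants. Since $X$ is a variety, each $G$-orbit is a finite set of points and therefore lies in an affine open subscheme (this is automatic when $X$ is quasi-projective); intersecting such an open with its finitely many $G$-translates shows that $X$ is covered by $G$-stable affine opens $U=\Spec A$. For such $U$ one sets $U/G:=\Spec A^{G}$, and because forming invariants commutes with localization at a $G$-invariant element these patches glue to give $X/G$. Hence it suffices to prove that $\pi\colon\Spec A\to\Spec A^{G}$ is finite and \'etale.

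Finiteness is immediate: every $a\in A$ is a root of the monic polynomial $\prod_{g\in G}(t-g\cdot a)\in A^{G}[t]$, so $A$ is integral over $A^{G}$; being also a finitely generated $\Bbbk$-algebra, $A$ is a finite $A^{G}$-module, and then $A^{G}$ is a finitely generated $\Bbbk$-algebra by the Artin--Tate lemma. In particular $\pi$ is finite and surjective.

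For \'etaleness the first step is to upgrade freeness on closed points to freeness on all points of $X$. For $g\in G$, $g\ne e$, the fixed locus $X^{g}=\{x\in X:gx=x\}$ is a closed subscheme, being the preimage of the diagonal under $(\Id,g)\colon X\to X\times X$ with $X$ separated; if $X^{g}$ were nonempty it would contain a $\Bbbk$-point, i.e.\ a closed point of $X$ fixed by $g$, contradicting the hypothesis. So $X^{g}=\varnothing$ for every $g\ne e$, which is exactly scheme-theoretic freeness of the action; combined with the classical fact that $G$ acts transitively on the primes of $A$ over a given prime of $A^{G}$, it follows that each fibre of $\pi$ is a single free $G$-orbit of $|G|$ points. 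By the classical Galois theory of commutative rings --- equivalently, by the theory of torsors under the constant, and hence finite \'etale, group scheme $\underline{G}$ --- this forces $A\otimes_{A^{G}}A\cong\prod_{g\in G}A$ (via $a\otimes b\mapsto(a\cdot g(b))_{g}$) and forces $A$ to be a finite projective, in particular flat, $A^{G}$-module; moreover $\pi$ is unramified, since under this isomorphism the relative diagonal $\Spec A\hookrightarrow\Spec(A\otimes_{A^{G}}A)$ is one of the $|G|$ open-and-closed components. A finite, flat, unramified morphism is \'etale, so $\pi$ is finite \'etale. No separability hypothesis on residue fields is needed, since at closed points these are all the algebraically closed field $\Bbbk$; thus the statement holds even when $\operatorname{char}\Bbbk$ divides $|G|$, as already illustrated by the Artin--Schreier map $\AA^{1}\to\AA^{1}$, $x\mapsto x^{p}-x$, the quotient by $G=\ZZ/p$ acting via $x\mapsto x+1$ in characteristic $p$.

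The existence of the quotient, the finiteness of $\pi$, and --- once scheme-theoretic freeness is established --- the unramifiedness are all routine. The two steps carrying genuine content are the passage from closed-point freeness to scheme-theoretic freeness, which is precisely where separatedness of the variety is used, and the flatness of $\pi$, i.e.\ the input from the Galois theory of commutative rings that a free finite group action realizes $X$ as a $G$-torsor over $X/G$.
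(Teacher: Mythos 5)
Your argument is correct in substance but follows a genuinely different route from the paper. The paper black-boxes the two key facts --- that $\pi$ is finite, faithfully flat and locally free, and that $(g,x)\mapsto(gx,x)$ gives an isomorphism $G\times X\cong X\times_{X/G}X$ --- by citing Mumford's construction of quotients (\emph{Abelian varieties}, \S12, Thm.~1), and then deduces \'etaleness by descent: the pullback of $\pi$ along itself is the totally split morphism $G\times X\to X$, and a finite locally free surjection whose self-pullback is totally split is finite \'etale (Lenstra, Thm.~5.8). You instead work affine-locally and invoke the Chase--Harrison--Rosenberg Galois theory of commutative rings to obtain the same torsor isomorphism $A\otimes_{A^G}A\cong\prod_{g\in G}A$ together with finite projectivity of $A$ over $A^G$, then conclude directly via ``finite $+$ flat $+$ unramified $=$ \'etale'', reading off unramifiedness from the relative diagonal being an open-and-closed component. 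The essential content (that $X\to X/G$ is a $G$-torsor) is the same in both proofs, just sourced differently; your version has the merit of making explicit the upgrade from freeness on closed points to scheme-theoretic freeness $X^g=\varnothing$, which the paper leaves implicit. One wrinkle: your opening claim that every $G$-orbit in a \emph{variety} lies in an affine open is false in general --- Hironaka's example gives a free $\ZZ/2$-action on a smooth proper threefold with an orbit contained in no affine open, and there the quotient is not even a scheme --- so your construction of $X/G$, like the statement itself, really requires quasi-projectivity (or the orbit condition) as a hypothesis. This covers every use of the proposition in the paper, and the paper's own appeal to Mumford carries exactly the same implicit assumption, so it is a defect of the statement rather than of your argument specifically, but it deserves to be said out loud rather than asserted with a ``therefore''.
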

\begin{proof}
  According to \cite[\S 12, Thm.~1]{Mum08} the quotient is finite, faithfully flat and locally free. The same result also confirms that the canonical map
  \begin{equation*}
    G\times X\ni (g,x)\mapsto (gx,x) \in X\times_{X/G}X
  \end{equation*}
  is an isomorphism.

  It follows that the pullback
  \begin{equation*}
    \pi_1:X\times_{X/G}X\to X
  \end{equation*}
  of $\pi$ along itself is \'etale, being isomorphic to the second projection $G\times X\to X$. The fact that $\pi$ itself is \'etale now follows from \cite[Thm.~5.8]{len-gal} together with its aforementioned properties: finiteness, faithful flatness (hence surjectivity) and local freeness.
\end{proof}

\begin{lemma}
\label{lem0}
The morphism $\Psi:E_0^r \times E \to E^r$ given by the formula
\begin{equation}
\label{quotient.map.0}
\Psi (x_1,\ldots,x_r,z)  \, := \,  (x_1+z,\ldots,x_r+z)
 \end{equation}
 is \'etale with Galois group $E[r]$ if and only if $r$ is coprime to the characteristic of the ground field $\Bbbk$. The fibers of $\Psi$ are the orbits with respect to the action of $E[r]$ on $E_0^r \times E$ given by the formula
\begin{equation}
\label{action0}
\xi \, \triangleright \, (x_1,\ldots,x_r,z) \; = \; (x_1+\xi,\ldots,x_r+\xi,z-\xi).
\end{equation}
\end{lemma}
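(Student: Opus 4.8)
The plan is to treat $\Psi$ as an isogeny of abelian varieties. First I would note that $E_0^r$ is an abelian subvariety of $E^r$ isomorphic to $E^{r-1}$: the map $E^{r-1}\to E^r$, $(y_1,\ldots,y_{r-1})\mapsto(y_1,\ldots,y_{r-1},-y_1-\cdots-y_{r-1})$, is a closed immersion with image $E_0^r$. Hence $E_0^r\times E$ is an abelian variety of dimension $r$, and $\Psi$ is a homomorphism to $E^r$, also of dimension $r$. Computing the kernel: $\Psi(x_1,\ldots,x_r,z)=0$ forces $x_i=-z$ for all $i$, and then $\sum x_i=0$ gives $rz=0$; conversely every such tuple is in the kernel. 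So $\ker\Psi=\setwithcondition{(-z,\ldots,-z,z)}{z\in E[r]}$, which is finite, and therefore $\Psi$ is an isogeny — in particular finite, faithfully flat and surjective — whose set-theoretic kernel is in bijection with $E[r]$.

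Next I would settle étaleness. A finite morphism between smooth varieties of the same dimension is automatically flat, so $\Psi$ is étale if and only if $d\Psi_x$ is an isomorphism for all $x$; since $\Psi$ is a group homomorphism, translations intertwine $\Psi$ with itself, so it suffices to check this at the identity, where both tangent spaces have dimension $r$. Now $T_0(E_0^r\times E)=\setwithcondition{(v_1,\ldots,v_r,w)\in(T_0E)^{r+1}}{\textstyle\sum v_i=0}$ and $d\Psi_0(v_1,\ldots,v_r,w)=(v_1+w,\ldots,v_r+w)$. An element of its kernel has $v_i=-w$ with $\sum v_i=-rw=0$ in the one-dimensional space $T_0E$, and such a nonzero element exists precisely when $r=0$ in $\Bbbk$. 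Hence $d\Psi_0$ is injective (equivalently an isomorphism) if and only if $\mathrm{char}(\Bbbk)\nmid r$; when $\mathrm{char}(\Bbbk)\mid r$ the map $\Psi$ is ramified and hence not étale. This proves both directions of the "if and only if".

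Finally, assume $\mathrm{char}(\Bbbk)\nmid r$, so $\Psi$ is étale; then the scheme-theoretic kernel is reduced and $\Psi^{-1}(0)$ is exactly the finite group $\ker\Psi$, of order $|E[r]|=r^2=\deg\Psi$. Translation by each element of $\ker\Psi$ is a deck transformation of the connected finite étale cover $\Psi$ (since $\Psi(x+a)=\Psi(x)+\Psi(a)=\Psi(x)$ for $a\in\ker\Psi$), producing $|\ker\Psi|=\deg\Psi$ distinct automorphisms over $E^r$; hence $\Psi$ is Galois with group $\ker\Psi$. The isomorphism $E[r]\xrightarrow{\ \sim\ }\ker\Psi$, $\xi\mapsto(\xi,\ldots,\xi,-\xi)$ — this tuple lies in $E_0^r\times E$ and in $\ker\Psi$ exactly because $\xi\in E[r]$, and it sweeps out all of $\ker\Psi$ — carries translation by $\ker\Psi$ to the action $\triangleright$ of \cref{action0}. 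Since the fibers of the homomorphism $\Psi$ are the cosets of $\ker\Psi$, they coincide with the $E[r]$-orbits under $\triangleright$, as asserted. (One could equally run this last step through \cref{pr.quot-etale}: $E[r]$ acts freely on $E_0^r\times E$ via $\triangleright$, $\Psi$ is $E[r]$-invariant, and the induced finite bijective morphism $(E_0^r\times E)/E[r]\to E^r$ between smooth varieties is étale, hence an isomorphism.)

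The computations here are all elementary; the only points calling for a little care are the reduction of étaleness to the single tangent-space condition at the origin (legitimate because $\Psi$ is a homomorphism) and the bookkeeping identifying the two natural parametrizations of $\ker\Psi$ with the prescribed action $\triangleright$. I do not anticipate a serious obstacle.
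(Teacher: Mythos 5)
Your argument is correct, and the computational core — identifying the kernel of $\Psi$ with $\{(\xi,\ldots,\xi,-\xi)\mid \xi\in E[r]\}$ and checking that $d\Psi$ at the origin fails to be injective exactly when $\mathrm{char}(\Bbbk)$ divides $r$ — is the same as in the paper; the necessity direction is essentially verbatim. Where you diverge is in the sufficiency half: the paper identifies the $E[r]$-action with translation by the subgroup $T=\{(\xi,\ldots,\xi,-\xi)\}$, shows the induced bijection $(E_0^r\times E)/T\to E^r$ is an isomorphism by citing a bijective-plus-injective-on-tangent-spaces criterion, and then gets \'etaleness from \cref{pr.quot-etale} (free finite actions have finite \'etale quotients). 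You instead treat $\Psi$ directly as an isogeny, get flatness from miracle flatness for a finite morphism of smooth varieties of equal dimension, conclude \'etale from unramifiedness at the origin plus translation-homogeneity, and then obtain the Galois property by counting deck transformations against the degree. Both routes are standard and sound; yours avoids the external citation for the bijection-is-isomorphism step at the cost of invoking miracle flatness, and your parenthetical alternative at the end is essentially the paper's argument. The bookkeeping matching the two parametrizations of $\ker\Psi$ ($z\mapsto(-z,\ldots,-z,z)$ versus $\xi\mapsto(\xi,\ldots,\xi,-\xi)$) with the action in \cref{action0} is handled correctly.
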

\begin{proof}
  Denoting the Lie algebra of $E$ by $V$, the differential $d\Psi$ at the origin is the map
  \begin{equation*}
    V_0^r\times V\ni (v_1,\ldots,v_r,v)\mapsto (v_1+v,\ldots,v_r+v)\in V^r
  \end{equation*}
  where $V_0^r$ denotes the set of $r$-tuples in $V$ summing to $0$. This map is an isomorphism precisely when $\mathrm{char}\;\Bbbk$
  does not divide $r$, so that condition is indeed necessary.

As for sufficiency, identify $E[r]$ with the subgroup 
$$
T \; :=\; \{(\xi,\ldots,\xi,-\xi) \; | \; \xi \in E[r]\}
$$
of $E_0^{r} \times E$ via the group isomorphism $\xi \longleftrightarrow (\xi,\ldots,\xi,-\xi)$ and observe that the action of $E[r]$ in \cref{action0} becomes identified with the translation action of $T$. $\Psi$ now induces a bijection
\begin{equation*}
  (E_0^r\times E)/T\to E^r
\end{equation*}
of abelian varieties whose differential is injective on tangent spaces (because it is so at the origin). The fact that this map is an isomorphism now follows from \cite[Cor.~14.10]{H92}, and the \'etaleness claim follows from \Cref{pr.quot-etale}.
\end{proof}

The point in $S^rE$ that is the image of $(x_1,\ldots,x_r) \in E^r$ will be denoted by
$$
(\!(x_1,\ldots,x_r)\!).
$$
In the next lemma, we identify $\PP^{r-1}$ with the subvariety $S_0^rE \subseteq S^rE$ 
consisting of those points whose coordinates sum to zero. We define the action of a point $\xi \in E[r]$ on $\PP^{r-1} \times E=S_0^rE \times E$ by 
\begin{equation}
\label{Er.action}
\xi \triangleright  \big((\!(x_1,\ldots,x_r)\!),z\big)   \; :=  \; \big((\!(x_1+\xi,\ldots,x_r+\xi)\!),z-\xi\big).
\end{equation} 
This is a free action of $E[r]$ on $\PP^{r-1} \times E=S_0^rE \times E$ simply because translation by $-\xi$ is a free action.

\begin{lemma}
\label{lem1}
The map $\Phi:\PP^{r-1} \times E \to S^rE$ given by the formula
\begin{equation}
\label{the.map.Phi}
\Phi \big((\!(x_1,\ldots,x_r)\!),z\big) := (\!(x_1+z,\ldots,x_r+z)\!)
\end{equation}
is the \'etale quotient with respect to the action of $E[r]$ in \cref{Er.action} if and only if $\gcd\{\mathrm{char}(\Bbbk),r\}=1$
\end{lemma}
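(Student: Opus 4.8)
The plan is to deduce this from \cref{lem0} by passing to $\Sigma_r$-quotients, using the identifications $S^rE = E^r/\Sigma_r$ and $\PP^{r-1}\times E = S_0^rE \times E = (E_0^r\times E)/\Sigma_r$, where $\Sigma_r$ acts on $E^r$ (resp.\ $E_0^r\times E$) by permuting coordinates (resp.\ permuting the first $r$ coordinates and fixing $z$).

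\emph{Necessity.} If $\Phi$ is the \'etale quotient it is in particular \'etale, so its differential is everywhere an isomorphism, and I would obtain the constraint by computing $d\Phi$ at a point $\big((\!(x_1,\ldots,x_r)\!),z\big)$ with $x_1,\ldots,x_r$ pairwise distinct. Near such a point $\Sigma_r$ acts freely on $E_0^r$, so $S^rE$ and $S_0^rE$ are smooth there, with $T_{(\!(x_1,\ldots,x_r)\!)}S_0^rE \cong V_0^r$ (the $r$-tuples in the Lie algebra $V$ of $E$ summing to $0$, as in the proof of \cref{lem0}); one then finds that $d\Phi$ is again the map $V_0^r\times V\to V^r$, $(v_1,\ldots,v_r,v)\mapsto(v_1+v,\ldots,v_r+v)$, which is not an isomorphism when $\mathrm{char}\,\Bbbk$ divides $r$. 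Hence coprimality is necessary.

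\emph{Sufficiency.} The key observation is that the translation action of $T=\{(\xi,\ldots,\xi,-\xi)\mid \xi\in E[r]\}\cong E[r]$ on $E_0^r\times E$ featuring in \cref{lem0} commutes with the coordinate-permutation action of $\Sigma_r$ — adding a fixed element to all coordinates commutes with permuting them — and that the map $\Psi$ of \cref{lem0} is $\Sigma_r$-equivariant. Taking $\Sigma_r$-quotients of the source and target of the finite \'etale $E[r]$-torsor $\Psi$ therefore produces a morphism $(E_0^r\times E)/\Sigma_r \to E^r/\Sigma_r$, which under the identifications above becomes $\PP^{r-1}\times E\to S^rE$; tracing the formulas shows this induced map is exactly \eqref{the.map.Phi}, with residual $E[r]$-action \eqref{Er.action}. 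Because the $\Sigma_r$- and $E[r]$-actions on $E_0^r\times E$ commute, the quotient by $\Sigma_r\times E[r]$ may be formed in either order, so $\Phi$ is identified with the quotient of $\PP^{r-1}\times E$ by the $E[r]$-action \eqref{Er.action}; since that action is free, \cref{pr.quot-etale} shows $\Phi$ is finite \'etale with Galois group $E[r]$.

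\emph{Main obstacle.} The delicate point is that $\Sigma_r$ does \emph{not} act freely on $E_0^r\times E$, so the intermediate map $E_0^r\times E\to\PP^{r-1}\times E$ is not \'etale and cannot be used directly; what saves the argument is that only the \emph{final} $E[r]$-action need be free. I would also need to justify the interchange of the two commuting finite-group quotients, which is standard for finite groups acting on quasi-projective varieties (and is especially transparent here since $\Psi$ is already an $E[r]$-torsor). As an alternative to the $\Sigma_r$-quotient route, one could mimic the proof of \cref{lem0} verbatim: check that $\Phi$ is $E[r]$-invariant, that it induces a bijection on closed points (surjectivity of $r\colon E\to E$ together with $|E[r]|=r^2$ when $\mathrm{char}\,\Bbbk\nmid r$), and that its differential is everywhere invertible, then conclude with \cite[Cor.~14.10]{H92} and \cref{pr.quot-etale}; the $\Sigma_r$-quotient argument, however, seems cleaner and reuses \cref{lem0} more directly.
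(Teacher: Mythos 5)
Your proposal is correct and follows essentially the same route as the paper: both arguments pass the diagram relating $\Psi$ and $\Phi$ through the commuting $\Sigma_r$- and $E[r]$-actions, identify $\Phi$ as the quotient by $E[r]$ via the interchange of the two commuting finite quotients, and conclude \'etaleness from freeness of the $E[r]$-action on $S_0^rE\times E$ together with \Cref{pr.quot-etale}. The necessity direction is likewise the paper's argument (transferring the failure of $d\Psi$ to be an isomorphism along the generically free $\Sigma_r$-quotients), just written out slightly more explicitly at a point with distinct coordinates.
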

\begin{proof}
  Let $\Psi:E_0^r \times E \to E^r$ be the morphism in \cref{quotient.map.0}.  Let $\b:E^r \to S^rE$ be the natural map, i.e., the quotient with respect to the natural action of $\Sigma_r$, and let $\a:E_0^r \times E \to S_0^r E \times E$ be the morphism $\a(\sfx,z)=(\b(\sfx),z)$.

  The diagram
$$
\xymatrix{
E^r_0 \times E \ar[d]_\Psi   \ar[rr]^\a    &&  S^r_0E \times E \ar[d]^\Phi 
\\
  E^r  \ar[rr]_\b   &&   S^rE
}
$$
is commutative, and $\a$ is equivariant for the actions of $E[r]$ on $E_0^r \times E$ and $S_0^r E \times E$ in \cref{action0} and \cref{Er.action}.

We observed in the proof of \Cref{lem0} that $\Psi$ does not induce an isomorphism 
on tangent spaces when $\mathrm{char}(\Bbbk)$ divides $r$. On the other hand, the symmetric group actions are free on open dense 
subschemes of the left-hand corners of the diagram, so $\Phi$ is not \'etale when $\mathrm{char}(\Bbbk)$ divides $r$.

Conversely, assume that $\mathrm{char}(\Bbbk)$ does not divide $r$. 

The action of $\Sigma_r$ on $E^r_0 \times E$ permuting the first $r$ coordinates and leaving the last coordinate unchanged commutes with the action of $E[r]$, so there is an action of $\Sigma_r \times E[r]$ on $E^r_0 \times E$.  Since $\Psi$ is the quotient with respect to the action of $E[r]$ and $\b$ the quotient with respect to the action of $\Sigma_r$, the morphism $\b\Psi:E_0^r \times E \to S^rE$ is the quotient with respect to the action of $\Sigma_r \times E[r]$.

In other words, $\Phi\a$ is the quotient with respect to the action of $\Sigma_r \times E[r]$.  But $S_0^rE$ is the quotient of $E^r_0$ with respect to the action of $\Sigma_r$, so $\a$ is also the quotient with respect to the action of $\Sigma_r$ on $E_0^r \times E$. It follows that $\Phi$ must be the quotient with respect to the action of $E[r]$ on $S_0^rE \times E$; one proves this by showing that $\Phi$ has the appropriate universal property for morphisms $S_0^rE \times E \to X$ that are constant on $E[r]$-orbits. 
Since the action of $E[r]$ on $S_0^rE \times E$ is free, $\Phi$ is \'etale by \Cref{pr.quot-etale}.
\end{proof}

The next two results extend \cref{lem1} to $E^g/\Sigma$ using its description in \cref{E^g/Sigma}.

\subsection{The case $J\neq\varnothing$}
\label{sect.easy.case}
 First, the easy case.

\begin{proposition}
\label{prop.easy.case}
Assume $\mathrm{char}(\Bbbk)$ does not divide the product $i_1\cdots i_s$.  If $J \ne \varnothing$, 
there is an \'etale cover
$$
\Psi:E^{|J|+s-1} \times \PP^{i_1-1} \times  \cdots \times \PP^{i_s-1} 
\; \longrightarrow \; E^g/\Sigma
$$
with Galois group $E[i_1] \times \cdots \times E[i_s]$.
\end{proposition}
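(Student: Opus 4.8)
The plan is to deduce this from \cref{thm1} and \cref{lem1} by a straightforward product construction. Since $J\ne\varnothing$, \cref{thm1} identifies
$$
E^g/\Sigma \;\cong\; E^{|J|-1} \times S^{i_1}E \times \cdots \times S^{i_s}E .
$$
Because $\mathrm{char}(\Bbbk)$ divides none of $i_1,\ldots,i_s$, \cref{lem1} applies to each factor and supplies, for every $\alpha$, a finite \'etale morphism $\Phi_\alpha\colon \PP^{i_\alpha-1}\times E \to S^{i_\alpha}E$ realizing $S^{i_\alpha}E$ as the quotient of $\PP^{i_\alpha-1}\times E$ by the free action of $E[i_\alpha]$ described in \cref{Er.action}.

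The next step is to assemble these into the morphism
$$
\id_{E^{|J|-1}}\times\Phi_1\times\cdots\times\Phi_s \colon\; E^{|J|-1}\times\prod_{\alpha=1}^{s}\bigl(\PP^{i_\alpha-1}\times E\bigr) \;\longrightarrow\; E^{|J|-1}\times\prod_{\alpha=1}^{s}S^{i_\alpha}E \;\cong\; E^g/\Sigma .
$$
This is finite \'etale: it is a composite of base changes, along the relevant projections, of the finite \'etale maps $\Phi_\alpha$ (and of identity maps). The group $E[i_1]\times\cdots\times E[i_s]$ acts on the source with the $\alpha$-th factor acting on $\PP^{i_\alpha-1}\times E$ as in \cref{Er.action} and trivially on the remaining factors; this action is free because each factor action is, and its orbits are exactly the fibers of $\id_{E^{|J|-1}}\times\Phi_1\times\cdots\times\Phi_s$. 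Hence by \cref{pr.quot-etale} — and the isomorphism $G\times X\cong X\times_{X/G}X$ established in its proof — the displayed morphism is a Galois cover of $E^g/\Sigma$ with Galois group $E[i_1]\times\cdots\times E[i_s]$.

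Finally I would rearrange the factors of the source,
$$
E^{|J|-1}\times\prod_{\alpha=1}^{s}\bigl(\PP^{i_\alpha-1}\times E\bigr) \;\cong\; E^{|J|+s-1}\times\PP^{i_1-1}\times\cdots\times\PP^{i_s-1},
$$
which puts $\Psi:=\id_{E^{|J|-1}}\times\Phi_1\times\cdots\times\Phi_s$ into the asserted shape. I do not expect a genuine obstacle here: the substance is entirely contained in \cref{thm1} and \cref{lem1}, and the only point requiring (routine) care is the general principle that a product of finite \'etale Galois covers is again a finite \'etale Galois cover with Galois group the product of the individual Galois groups — which reduces to the stability of \'etale and finite morphisms under base change and composition, together with \cref{pr.quot-etale}. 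Everything else is bookkeeping with the identification of \cref{thm1}.
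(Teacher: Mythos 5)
Your proposal is correct and follows essentially the same route as the paper: both use the identification of \cref{thm1} for $J\ne\varnothing$ and then take the product of the \'etale covers $\Phi_\alpha\colon\PP^{i_\alpha-1}\times E\to S^{i_\alpha}E$ from \cref{lem1}, obtaining the Galois group $E[i_1]\times\cdots\times E[i_s]$ as the product of the individual Galois groups. The paper is terser about why the product of Galois covers is again a Galois cover (it simply says ``by combining the maps'') and spends its remaining effort writing out the $\Gamma$-action explicitly, which your write-up omits but which is not needed for the statement itself.
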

\begin{proof}
In this proof we identify each $\PP^{i_\a-1}$  in the statement of the proposition with 
$S^{i_\a}_0E$, the subvariety of $S^{i_\a}E=S^{I_\a}E$ consisting of those points whose coordinates sum to $0$.

Since $J \ne \varnothing$, $E^g/\Sigma \cong E^{|J|-1} \times S^{i_1}E \times  \cdots \times S^{i_s}E$. 
By combining the maps in \cref{the.map.Phi} for each $S^{i_\a}E$, it follows from \cref{lem1} that there is an \'etale cover
\begin{equation}
\label{etale.cover}
\Psi: E^{|J|-1} \times (S_0^{i_1}E \times E) \times  \cdots \times (S_0^{i_s}E \times E) \; \longrightarrow \; E^g/\Sigma
\end{equation}
with Galois group $\Gamma:=E[i_1] \times \cdots \times E[i_s]$. 

To make  the action of $\Gamma$ explicit, we 
write the domain in \cref{etale.cover} as 
$$
E^{|J|-1} \times E^s \times S_0^{i_1}E \times  \cdots \times S_0^{i_s}E
$$
and points in it as 
\begin{equation}
\label{a.point}
(\sfy,z_1,\ldots,z_s,\sfx_1,\ldots,\sfx_s)
\end{equation}
where $\sfy \in E^{|J|-1}$, each $z_\a$ is in $E$,  and each $\sfx_\a$ is in $S_0^{i_\a}E$. 
 If $\xi=(\xi_1,\ldots,\xi_s) \in \Gamma$, then 
$E^g/\Sigma$ is the quotient of $E^{|J|-1} \times E^s \times S_0^{i_1}E \times  \cdots \times S_0^{i_s}E$ with respect to the action
$$
\xi \triangleright (\sfy,z_1,\ldots,z_s,\sfx_1,\ldots,\sfx_s) \; =\; (\sfy,z_1-\xi_1,\ldots,z_s-\xi_s,\xi_1\triangleright \sfx_1,\ldots,\xi_s\triangleright \sfx_s)
$$
where 
$$
\xi_\a \triangleright \sfx_\a  \;=\;  \xi_\a \triangleright  (\!(x_{\a, 1},x_{\a, 2},\ldots,x_{\a,i_\a})\!)  \;=\; 
    (\!(x_{\a, 1}+\xi_\a,x_{\a, 2}+\xi_\a,\ldots, x_{\a,i_\a}+\xi_\a  )\!)
$$
for $\a=1,\ldots,s$. 
\end{proof}

%%%%%%%%%%%%%%%%%%%%%%%%%%%%%%%%%%%%%%%%%%%%%%%%%%%%%%%%%%%%%%%%%%%%%%%%%%%%%
\subsection{The case $J= \varnothing$}
\label{sect.hard.case}
Assume $J=\varnothing$.

In this case,
$$
E^g/\Sigma \; \cong \; \big(S^{i_1}E \times  \cdots \times S^{i_s}E\big)_0.
$$
It follows from \cref{lem1} that there is an \'etale cover 
\begin{equation}
\label{etale.cover2}
\Psi: E^s \times S_0^{i_1}E  \times  \cdots \times S_0^{i_s}E  \; \longrightarrow \;  S^{i_1}E \times  \cdots \times S^{i_s}E
\end{equation}
with Galois group $E[i_1] \times \cdots  \times E[i_s]$ given by the formula
\begin{align*}
\Psi(z_1,\ldots,z_s,\sfx_1,\ldots,\sfx_s)  \; :=\; & (\sfx_1 + z_1, \ldots , \sfx_s + z_s)
\\
 \;=\; & \big( (\!(x_{1,1}+z_1,\ldots,x_{1,i_1}+z_1)\!), \ldots, (\!(x_{s,1}+z_s,\ldots,x_{s,i_s}+z_s)\!)\big).
\end{align*}
The point $\Psi(z_1,\ldots,z_s,\sfx_1,\ldots,\sfx_s)$ belongs to $\big(S^{i_1}E \times  \cdots \times S^{i_s}E\big)_0$ if and 
only if $i_1 z_1+\cdots + i_s z_s =0$.

The map $\Theta:E^s \to E$ defined by 
\begin{equation}
\label{defn.Theta}
\Theta (z_1,\ldots,z_s) \; :=\; i_1 z_1+\cdots + i_s z_s
\end{equation}
is a group homomorphism, and the \'etale cover $\Psi$ restricts to an \'etale cover
\begin{equation}
\label{etale.cover3}
\Psi: \ker(\Theta) \times S_0^{i_1}E  \times  \cdots \times S_0^{i_s}E  \; \longrightarrow \;  \big(S^{i_1}E \times  \cdots \times S^{i_s}E\big)_0 \, \cong \, E^g/\Sigma.
\end{equation}

\begin{lemma}\label{le.kertheta}
The kernel of the  homomorphism $\Theta:E^s \to E$ defined by the formula
\begin{equation}
\label{defn.Theta2}
\Theta (z_1,\ldots,z_s) \; :=\; i_1 z_1+\cdots + i_s z_s
\end{equation}
is isomorphic to $E[d] \times E^{s-1}$ where $d=\gcd\{i_1,\ldots,i_s\}$.
\end{lemma}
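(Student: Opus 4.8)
The plan is to reduce the claim to a statement about the cokernel of $\Theta$ together with a splitting argument. First I would observe that since $d = \gcd\{i_1,\ldots,i_s\}$, there exist integers $a_1,\ldots,a_s$ with $a_1 i_1 + \cdots + a_s i_s = d$, and hence the homomorphism $\Theta$ is surjective: indeed, writing $i_\a = d j_\a$ with $\gcd\{j_1,\ldots,j_s\}=1$, one has $\Theta(z_1,\ldots,z_s) = d\cdot(j_1 z_1 + \cdots + j_s z_s)$, and the map $(z_1,\ldots,z_s)\mapsto j_1 z_1 + \cdots + j_s z_s$ is surjective because the $j_\a$ are globally coprime (pick $a_\a$ with $\sum a_\a j_\a = 1$ and use $z_\a = a_\a w$ to hit an arbitrary $w\in E$). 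Since a surjective homomorphism of abelian varieties $\Theta\colon E^s \to E$ has connected kernel of dimension $s-1$ whenever it is surjective, this already tells us $\ker(\Theta)^\circ$ is an abelian subvariety of dimension $s-1$; but I will get the precise structure more directly, so connectedness need not be invoked separately.

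The key step is to construct an explicit change of coordinates on $E^s$ realizing the isomorphism. Choose integers $a_1,\ldots,a_s$ with $\sum_\a a_\a j_\a = 1$ where $j_\a = i_\a/d$. I would produce a matrix $M \in \mathrm{GL}(s,\ZZ)$ whose first row is $(j_1,\ldots,j_s)$; such a matrix exists precisely because $\gcd\{j_1,\ldots,j_s\}=1$ (the row $(j_1,\ldots,j_s)$ is part of a $\ZZ$-basis of $\ZZ^s$, a standard fact about unimodular rows). The induced automorphism $M\colon E^s \to E^s$ is an isomorphism of abelian varieties, and under it $\Theta$ becomes the map $(w_1,\ldots,w_s)\mapsto d\,w_1$, i.e. $d$ in the first coordinate and $0$ in the rest. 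Therefore $\ker(\Theta) \cong \ker(d\colon E\to E)\times E^{s-1} = E[d]\times E^{s-1}$, which is exactly the claim.

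The main obstacle — really the only nontrivial input — is the linear-algebra fact that a unimodular row over $\ZZ$ extends to an invertible integer matrix; everything else is formal. I would justify it either by citing it as standard (Smith normal form of the $1\times s$ matrix $(j_1,\ldots,j_s)$ is $(1,0,\ldots,0)$, which gives $U\in\mathrm{GL}(1,\ZZ)$, $V\in\mathrm{GL}(s,\ZZ)$ with $(j_1,\ldots,j_s)V = (1,0,\ldots,0)$, and then $M = V^{-1}$ has the desired first row after transposing appropriately) or by an explicit induction on $s$. One should also double-check the compatibility of $M$ acting on $E^s$ with $M$ acting on the character/cocharacter side so that $\Theta\circ M^{-1}$ really is $d$ in the first slot — this is just the observation that for a homomorphism given by an integer row vector $\mathbf{r}$, precomposition with the automorphism induced by $N\in\mathrm{GL}(s,\ZZ)$ replaces $\mathbf{r}$ by $\mathbf{r}N$, so choosing $N = M^{-1}$ with $(j_1,\ldots,j_s)M^{-1} = (1,0,\ldots,0)$ does the job. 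No characteristic hypothesis on $\Bbbk$ is needed here.
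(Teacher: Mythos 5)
Your proof is correct and follows essentially the same route as the paper: both arguments use a $\mathrm{GL}_s(\ZZ)$ change of coordinates on $E^s$ carrying the row $(i_1,\ldots,i_s)$ to $(d,0,\ldots,0)$, so that $\Theta$ becomes $(w_1,\ldots,w_s)\mapsto dw_1$ with kernel $E[d]\times E^{s-1}$. The only differences are cosmetic (you first divide out $d$ and extend a unimodular row, and you add a surjectivity remark that is not needed).
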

\begin{proof}
  Multiplication by some invertible matrix $M\in GL_s(\ZZ)$ implements the transformation
  \begin{equation*}
    (i_1,i_2,\ldots,i_s)\mapsto (d,0,\ldots,0).
  \end{equation*}
  Composing $\Theta$ with the automorphism of $E^s$ corresponding to the transpose $M^{t}$ of $M$ will produce the morphism
  \begin{equation*}
    \Theta\circ M^{t}:(z_1,z_2,\ldots,z_s)\mapsto (dz_1,0,\ldots,0),
  \end{equation*}
  whose kernel is as described in the statement.
\end{proof}

\begin{proposition}\label{pr.hard-case}
  Assume $\mathrm{char}(\Bbbk)$ does not divide the product $i_1\cdots i_s$.  If $J = \varnothing$, there is an \'etale cover
$$
\Psi:E^{s-1} \times \PP^{i_1-1} \times  \cdots \times \PP^{i_s-1} 
\; \longrightarrow \; E^g/\Sigma
$$
with Galois group $E[i_1] \hdot \cdots \hdot E[i_s]$.
\end{proposition}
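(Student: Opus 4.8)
The plan is to bootstrap from the \'etale cover \cref{etale.cover3}, whose domain is $\ker(\Theta)\times S_0^{i_1}E\times\cdots\times S_0^{i_s}E$ and whose Galois group is $\Gamma:=E[i_1]\times\cdots\times E[i_s]$, by inserting the description of $\ker(\Theta)$ from \cref{le.kertheta} and then ``absorbing'' the resulting finite factor $E[d]$, at the cost of shrinking $\Gamma$ down to $\Gamma':=E[i_1]\hdot\cdots\hdot E[i_s]$. Throughout, identify each $\PP^{i_\a-1}$ with $S_0^{i_\a}E$ as in \cref{lem1} and set $V:=E^{s-1}\times\PP^{i_1-1}\times\cdots\times\PP^{i_s-1}$; the goal is an isomorphism $E^g/\Sigma\cong V/\Gamma'$ realized by a free action of $\Gamma'$ on $V$.

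First I would make \cref{le.kertheta} explicit. With $M\in GL_s(\ZZ)$ the matrix used in its proof --- so that $M\,(i_1,\ldots,i_s)^{t}=(d,0,\ldots,0)^{t}$, whence the first column of $M^{-1}$ is $\tfrac1d(i_1,\ldots,i_s)^{t}$ --- the automorphism $M^{t}$ of $E^s$ restricts to an isomorphism $E[d]\times E^{s-1}=\ker(z\mapsto dz_1)\xrightarrow{\ \sim\ }\ker(\Theta)$. Any $\xi=(\xi_1,\ldots,\xi_s)\in\Gamma$ already lies in $\ker(\Theta)$, since each $\xi_\a\in E[i_\a]$ kills $i_\a$, and the $\Gamma$-action on $\ker(\Theta)$ underlying \cref{etale.cover3} is simply translation by $-\xi$. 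Transporting this through $(M^{t})^{-1}$ and reading off the first coordinate, one finds that $\xi$ acts on $E[d]\times E^{s-1}$ by a translation whose $E[d]$-component is $-p(\xi)$, where
\[
  p(\xi)\ :=\ \tfrac{i_1}{d}\,\xi_1+\cdots+\tfrac{i_s}{d}\,\xi_s\ \in\ E[d].
\]
This is the one real calculation in the argument, and it is exactly where the definition \cref{eq:1} comes in: by that definition $\ker(p)=E[i_1]\hdot\cdots\hdot E[i_s]=\Gamma'$. Moreover $p\colon\Gamma\to E[d]$ is surjective because $\gcd\{i_1/d,\ldots,i_s/d\}=1$, so it induces an isomorphism $\Gamma/\Gamma'\cong E[d]$.

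Finally I would assemble the cover. Combining the previous step with \cref{le.kertheta}, the domain of \cref{etale.cover3} becomes $W:=E[d]\times V$, on which $\Gamma$ acts compatibly with the product decomposition: on the $E[d]$-factor by translation through $p$, and on $V$ by translation on the $E^{s-1}$ together with the action of $E[i_\a]$ on each $\PP^{i_\a-1}$ used in \cref{prop.easy.case}. Since $\mathrm{char}(\Bbbk)\nmid d$, the group scheme $E[d]$ is a disjoint union of $d^2$ reduced points, so $W$ is a disjoint union of $d^2$ copies of $V$, and $\Gamma$ permutes these copies through $p$, hence transitively, with the stabilizer of the copy $\{0\}\times V$ equal to $\ker(p)=\Gamma'$. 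Restricting the Galois cover $W\to W/\Gamma\cong E^g/\Sigma$ of \cref{etale.cover3} to the connected component $\{0\}\times V\cong V$ then exhibits $V\to E^g/\Sigma$ as a Galois cover with group $\Gamma'$; concretely, $\Gamma'$ acts freely on $V$ (being a subgroup acting freely inside the free $\Gamma$-action on $W$), $E^g/\Sigma\cong V/\Gamma'$, and the cover is finite \'etale by \cref{pr.quot-etale}. I expect the main obstacle to be precisely the bookkeeping in the middle step --- checking that the $E[d]$-component of the translation action is the map $p$ of \cref{eq:1} and not merely \emph{some} surjection $\Gamma\twoheadrightarrow E[d]$ --- since this is what pins down the Galois group as $E[i_1]\hdot\cdots\hdot E[i_s]$; everything after that is formal manipulation of free finite quotients.
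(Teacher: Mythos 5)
Your proof is correct and follows essentially the same route as the paper's: restrict the \'etale cover \cref{etale.cover3} to one of the $d^{2}$ sheets produced by \cref{le.kertheta} and identify the stabilizer of that sheet in $\Gamma=E[i_1]\times\cdots\times E[i_s]$ with the right-hand side of \cref{eq:1}. The only difference is that you spell out what the paper calls ``immediate'' --- the computation, via the surjection $p(\xi)=\tfrac{i_1}{d}\xi_1+\cdots+\tfrac{i_s}{d}\xi_s$, that the stabilizer of the identity component is $\ker(p)=E[i_1]\hdot\cdots\hdot E[i_s]$, and that $\Gamma$ permutes the sheets transitively (the paper instead invokes connectedness of the codomain) --- and those details check out.
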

\begin{proof}
Let $\Theta$ be the map in \Cref{defn.Theta2} and let $\ker(\Theta)^0$ be the connected component of the identity   in the algebraic group $\ker(\Theta)$.

We have already seen that \Cref{etale.cover3} provides an \'etale cover $\Psi$ with Galois group $\Gamma:=E[i_1]\times\cdots \times E[i_s]$. By \Cref{le.kertheta}, the domain of that cover is a disjoint union of $d^2$ copies of $E^{s-1} \times \PP^{i_1-1} \times  \cdots \times \PP^{i_s-1}$ and, since the codomain is connected, the restriction of $\Psi$ to any one of those copies/sheets is an \'etale cover, and 
the stabilizer of that sheet in $\Gamma$ is the Galois group.
In particular,  the sheet $\ker(\Theta)^0 \times S_0^{i_1}E  \times  \cdots \times S_0^{i_s}E$ provides an \'etale cover; 
it is immediate that the stabilizer group  in $\Gamma$  of this sheet is the right-hand side of \Cref{eq:1}. This concludes the proof.
\end{proof}

  \cref{main.thm} is obtained by combining the results in \cref{prop.easy.case,pr.hard-case}.

\subsection{Lifting automorphisms of $E^g/\Sigma$ to its \'etale cover}
\label{sect.lift}

When $E^g/\Sigma$ appears in the context of the elliptic algebras mentioned in \cref{origin1} it is endowed with a distinguished
automorphism $\s$ that is induced by an automorphism of $E^g$ that is translation by a point $\sft$ that is fixed by $\Sigma$. We
will now show that this induces an automorphism of the \'etale cover in   \cref{main.thm}. The elliptic algebras are defined over $\CC$
but the result is true in greater generality.

\begin{proposition}
Assume ${\rm char}(\Bbbk)$ does not divide the product $i_1 \cdots i_s$.
Let  $\s:E^g \to E^g$ be a translation automorphism $\s(\sfy)=\sfy+\sft$. 
If $\sft$ is fixed by $\Sigma$, then $\s$ descends to an automorphism of $E^g/\Sigma$ that, in turn, lifts to an automorphism
of the \'etale cover described in   \cref{main.thm} and \cref{sect.easy.case,sect.hard.case}.
\end{proposition}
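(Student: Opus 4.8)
The plan is to keep the explicit descriptions of $E^g/\Sigma$ and of its \'etale cover in hand and simply to write down the lifted automorphism, so that the content reduces to a compatibility check against the formulas in \cref{lem1,prop.easy.case,pr.hard-case}. First I would record what $\sigma$ does on the quotient. Writing $E^g\cong E^{g+1}_0=E^J\times E^{I_1}\times\cdots\times E^{I_s}$ and $\mathsf{t}=(t_j)_{j=1}^{g+1}$ for the translation vector, the hypothesis that $\mathsf{t}$ be $\Sigma$-fixed says precisely that $t_j$ depends only on the $\Sigma$-orbit of $j$; let $\tau_\a$ be its common value on $I_\a$, so that $\sum_{j\in J}t_j+\sum_\a i_\a\tau_\a=0$. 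Since $\Sigma$ acts on $E^g$ by group automorphisms fixing $\mathsf{t}$, translation by $\mathsf{t}$ commutes with the $\Sigma$-action and hence descends to an automorphism $\overline{\sigma}$ of $E^g/\Sigma$. On $E^{g+1}/\Sigma\cong E^J\times S^{I_1}E\times\cdots\times S^{I_s}E$ this $\overline{\sigma}$ is the automorphism that translates the $E^J$-factor by $(t_j)_{j\in J}$ and, on each $S^{I_\a}E$, translates all $i_\a$ points simultaneously by $\tau_\a$; the relation just displayed makes this preserve the zero-sum locus, so via \cref{E^g/Sigma} one obtains the matching description of $\overline{\sigma}$ on $E^g/\Sigma$.

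For the case $J\neq\varnothing$, recall from \cref{prop.easy.case} the cover $\Psi\colon E^{|J|-1}\times(S_0^{i_1}E\times E)\times\cdots\times(S_0^{i_s}E\times E)\to E^g/\Sigma$, $(\mathsf{y},(\mathsf{x}_\a,z_\a)_\a)\mapsto(\mathsf{y},(\mathsf{x}_\a+z_\a)_\a)$, with Galois group $E[i_1]\times\cdots\times E[i_s]$ acting only by $z_\a\mapsto z_\a-\xi_\a$ and $\mathsf{x}_\a\mapsto\mathsf{x}_\a+\xi_\a$; note that $\Psi$ leaves the $E^{|J|-1}$-factor untouched. I would define $\widetilde{\sigma}$ to translate the $E^{|J|-1}$-coordinate by $(t_j)_{j\in J}$ (with the last coordinate of $E^J$ dropped, as in the identification explained after \cref{thm1}), to fix each $\mathsf{x}_\a$, and to translate each $z_\a$ by $\tau_\a$. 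This is visibly an automorphism of the source, it commutes with the Galois action, and $\Psi\circ\widetilde{\sigma}=\overline{\sigma}\circ\Psi$ is immediate from \cref{the.map.Phi}; so $\widetilde{\sigma}$ is the required lift. (Concretely $\widetilde{\sigma}$ is the product, over the factors, of the lift of a translation of $S^rE$ to $\PP^{r-1}\times E$ that translates the $E$-factor.)

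The case $J=\varnothing$ is where I expect the only real obstacle. Now the zero-sum relation reads $\tau:=(\tau_1,\ldots,\tau_s)\in\ker\Theta$ for $\Theta$ as in \cref{defn.Theta2}, so translating the $\ker\Theta$-coordinate by $\tau$ (and fixing the rest) is an automorphism $\widetilde{\sigma}$ of the disconnected cover $\ker\Theta\times S_0^{i_1}E\times\cdots\times S_0^{i_s}E$ from \cref{etale.cover3}; it commutes with $\Gamma=E[i_1]\times\cdots\times E[i_s]$ and satisfies $\Psi\circ\widetilde{\sigma}=\overline{\sigma}\circ\Psi$. The trouble is that $\tau$ need not lie in $\ker(\Theta)^0$, so $\widetilde{\sigma}$ need not preserve the single sheet $\ker(\Theta)^0\times S_0^{i_1}E\times\cdots\times S_0^{i_s}E$ that \emph{is} the \'etale cover of \cref{pr.hard-case}; it moves that sheet to the one indexed by the class of $\tau$ in $\pi_0(\ker\Theta)$. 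To remedy this I would use that, $E^g/\Sigma$ being connected, $\Gamma$ permutes the sheets transitively --- equivalently, the natural surjection $\Gamma\twoheadrightarrow\pi_0(\ker\Theta)$, whose kernel is the group $E[i_1]\hdot\cdots\hdot E[i_s]$ appearing in \cref{eq:1} --- to choose $\xi\in\Gamma$ returning the displaced sheet to $\ker(\Theta)^0\times S_0^{i_1}E\times\cdots\times S_0^{i_s}E$. Then $\xi\circ\widetilde{\sigma}$ restricts to an automorphism of that sheet, and since $\Psi\circ\xi=\Psi$ one still has $\Psi\circ(\xi\circ\widetilde{\sigma})=\overline{\sigma}\circ\Psi$ on it; that is the lift in this case, and combining the two cases proves the proposition. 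Apart from the sheet correction just described, every step is a direct check against the explicit formulas in \cref{lem0,lem1,prop.easy.case,pr.hard-case}.
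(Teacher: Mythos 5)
Your proposal is correct and follows essentially the same route as the paper: descend $\sigma$ using $\Sigma$-invariance of $\sft$, lift it factor-by-factor via the explicit formulas, and in the $J=\varnothing$ case repair the failure of the naive lift to preserve the identity-component sheet by composing with a deck transformation in $\Gamma$. The only (cosmetic) difference is that where you invoke transitivity of $\Gamma$ on the sheets to assert the existence of the correcting element $\xi$, the paper constructs it explicitly as $\xi=(q,0,\ldots,0)$ with $q\in E[i_1]$ chosen so that $\tfrac{i_1}{d}q=\tfrac{i_1}{d}t_1+\cdots+\tfrac{i_s}{d}t_s$.
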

\begin{proof}
Suppose  $\sft$ is $\Sigma$-invariant.

Recall that $E^{g+1}=E^J \times E^{I_1} \times \cdots \times E^{I_s}$ is a factorization as a product
of subgroups. When we identify $E^g$ with the subgroup $E_0^{g+1}$ of $E^{g+1}$ we can, and will, write $\sft$ as
$$
\sft \;=\; (\sft_J,\sft_1,\ldots,\sft_s)
$$
where $\sft_J \in E^J$ and $\sft_\a \in E^{I_\a}$ for each $\a$. If $J=\varnothing$, then $\sft_J=0\in \{0\}=E^J$. 
As pointed out in \cite[Prop.~2.10]{CKS2}, $\s$ descends to an automorphism of $E^g/\Sigma$ because $\sft$ is $\Sigma$-invariant.

Since $E_0^{g+1}\subset E^g$ is stable under the translation by $\sft$, we have
\begin{equation}\label{eq.sum.zero}
	\sfsum(\sft_{J})+\sfsum(\sft_{1})+\cdots+\sfsum(\sft_{s})=0.
\end{equation}
Moreover, the fact that $\sft$ is fixed by each subgroup $\Sigma_{I_\a}$ of $\Sigma$ implies that $\sft_\a$ is a diagonal element, 
$\sft_\a=(t_\a,\ldots,t_\a)$ for some $t_\a \in E$. Hence
\begin{equation*}
	\sfsum(\sft_{J})+i_{1}t_{1}+\cdots+i_{s}t_{s}=0.
\end{equation*}
The automorphism 
$$
\big((\!(x_1,\ldots,x_{i_\a})\!),z\big) \mapsto  \big((\!(x_1,\ldots,x_{i_\a})\!),z+t_\a\big)
$$
of  the \'etale cover $\PP^{i_\a-1} \times E$ of  $S^{i_\a}E=S^{I_\a}E$ (in \cref{lem1}) provides a lift of that
automorphism 
$$
(\!(x_1,\ldots,x_{i_\a})\!) \mapsto (\!(x_1+t_\a,\ldots,x_{i_\a}+t_\a)\!),
$$
of $S^{I_\a}E$ that is induced by the translation automorphism $\sfx \mapsto \sfx+\sft_\a$ of $E^{I_\a}$.

Suppose $J \ne \varnothing$. Write $\sft_{J}=(\sft',t)$ where $\sft'\in E^{|J|-1}$ and $t\in E$. Using the notation in the proof of \cref{prop.easy.case}, the automorphism
$$
(\sfy,z_1,\ldots,z_s,\sfx_1,\ldots,\sfx_s) \mapsto (\sfy+\sft',z_1+t_1,\ldots,z_s+t_s,\sfx_1,\ldots,\sfx_s)
$$
of the cover $E^{|J|-1} \times E^s \times S_0^{i_1}E \times  \cdots \times S_0^{i_s}E$ provides a lift of $\s:E^g/\Sigma \to E^g/\Sigma$. Indeed, if we regard $E^{g}/\Sigma$ as the subvariety of $E^{J} \times S^{I_1}E \times \cdots \times S^{I_s}E$, the translation on the last coordinate of $E^{J}$ should be the addition of $t$ because of \cref{eq.sum.zero}. Hence the proposition is true when  $J \ne \varnothing$. 

Suppose $J = \varnothing$. Thus, $\sft \;=\; (\sft_1,\ldots,\sft_s)$.
We will use the notation in \cref{sect.hard.case}. 
Note that the component $\ker(\Theta)^{0}$ is the kernel of $\tfrac{1}{d}\Theta\colon E^{s}\to E$. Take $q\in E$ such that
\begin{equation*}
	\frac{i_{1}}{d}q\;=\;\frac{i_{1}}{d}t_{1}+\cdots+\frac{i_{s}}{d}t_{s}.
\end{equation*}
Equation \cref{eq.sum.zero} implies that
\begin{equation*}
	i_{1}q\;=\;i_{1}t_{1}+\cdots+i_{s}t_{s}\;=\;0.
\end{equation*}
Thus $q\in E[i_{1}]$.
It follows that $\ker(\Theta)^{0} \times S_0^{i_1}E  \times  \cdots \times S_0^{i_s}E$ is stable under  
the ``translation automorphism''
$$
(z_1,\ldots,z_s,\sfx_1,\ldots,\sfx_s) \mapsto (z_1+t_1-q,z_{2}+t_{2},\ldots,z_s+t_s,\Phi(\sfx_1,q),\sfx_{2},\ldots,\sfx_s)
$$
of $E^s \times  S^{i_1}E  \times  \cdots \times S^{i_s}E$, where
\begin{equation*}
	\Phi \big((\!(x_1,\ldots,x_{i_{1}})\!),q\big) := (\!(x_1+q,\ldots,x_{i_{1}}+q)\!).
\end{equation*}
The restriction of that ``translation automorphism''
to $\ker(\Theta)^{0} \times S_0^{i_1}E  \times  \cdots \times S_0^{i_s}E$ provides the desired lift of $\s$ to the \'etale cover.
\end{proof}

%\bibliography{biblio4}
\bibliographystyle{customamsalpha}

 \def\cprime{$'$}
\providecommand{\bysame}{\leavevmode\hbox to3em{\hrulefill}\thinspace}
\providecommand{\MR}{\relax\ifhmode\unskip\space\fi MR }
% \MRhref is called by the amsart/book/proc definition of \MR.
\providecommand{\MRhref}[2]{%
  \href{http://www.ams.org/mathscinet-getitem?mr=#1}{#2}
}
\providecommand{\href}[2]{#2}

\end{document}